\documentclass[11pt]{article}
\usepackage{amsmath,amstext,amssymb,amsthm}

\usepackage[english]{babel}
\usepackage[T1]{fontenc}
\usepackage[utf8]{inputenc}
\usepackage{tikz}
\usepackage{url}
\usepackage{times}

\usepackage{hyperref}

\newtheorem{theorem}{Theorem}
\newtheorem{lemma}[theorem]{Lemma}

\theoremstyle{definition}
\newtheorem{example}{Example}

\newtheorem{problem}{Problem}

\newcommand{\Hall}{\mathbf{m}}
\newcommand{\Al}{\Sigma}


\newcommand{\eps}{\varepsilon}
\newcommand{\per}{\partial}


\newcommand{\EXTRA}[1]{}

\newcommand{\Cr}{\eta}
\newcommand{\sdot}{{\cdot}}
\begin{document}

\title{Critical Factorisation in Square-Free Words}

\author{Tero Harju\\
Department of Mathematics and Statistics\\
         University of~Turku, Finland\\
         \texttt{harju@utu.fi}\\
         }

\maketitle

\begin{abstract}
A position $p$ in a word $w$ is critical if the minimal local period at $p$ 
is equal to the global period of $w$. According to the Critical Factorisation Theorem
all words of length at least two have a critical point. We study the number 
$\Cr(w)$ of critical points of square-free ternary words $w$, i.e.,  words over a three letter alphabet. 
We show that the sufficiently long square-free words $w$ satisfy 
$\Cr(w) \le |w|-5$ where $|w|$ denotes the length of $w$. Moreover, the bound $|w|-5$ is reached by
infinitely many words.
On the other hand, every square-free word $w$ has at least $|w|/4$ critical points, and there is a sequence of these words closing to this bound.
\end{abstract}

\noindent
\textbf{Keywords.}
Critical point, critical factorisation theorem, 
ternary words, square-free word.

\section{Introduction}

The Critical Factorisation Theorem~\cite{CesariVincent:78,Duval:79}
is one of the gems in combinatorics on words. It
states that each word $w$ with $|w| \ge 2$
has a critical point, i.e., a position where the local period $\per(w, p)$ is equal to 
the global period $\per(w)$ of the word. For a word $w$
with a factorisation 
$w=xy$, $\per(w, |x|)$ denotes the length of the shortest word $u$ such that of $u$ and $x$ one is a suffix of the other,
and of $u$ and $y$ one is a prefix of the other.

In the binary case, say $w \in \{0,1\}^*$, it was shown 
in~\cite{HarjuNowotka} that there are words having only one
critical point; e.g., the Fibonacci words
of length at least five are such. Also, it was shown there that
each binary word $w$ of length $|w| \ge 5$ and period
$\per(w) > |w|/2$ has less than $|w|/2$ critical points.

We shall now study the number of critical points in
ternary square-free words.
We show that, each sufficiently long square-free word $w$ 
can have at most $|w|-5$ critical points, and the bound $|w|-5$
is obtained by infinitely many square-free $w$.
Also, we prove that a square-free word $w$
has at least $|w|/4$ critical points, and that there is a sequence of 
square-free words closing to this bound.

\section{Preliminaries}\label{sec.prelim}

For a more extensive introduction to
combinatorics on words, including square-freeness and
critical factorisation,
we refer to Lothaire~\cite{LothaireI}.

For a finite alphabet $\Al$, let $\Al^\ast$ denote the monoid 
of all finite words over $\Al$ under concatenation.
The empty word is denoted by~$\eps$.
Let $w\in \Al^\ast$.
The length $|w|$ of~$w$ is the number of the occurrences of its letters. 
If $w=w_1 u w_2$ then $u$ is a~\emph{factor} of $w$.
It is a \emph{prefix} if $w_1=\eps$, and a \emph{suffix} 
if $w_2=\eps$.
The word~$w$ is said to be \emph{bordered} if there exists a nonempty word~$v$, with $v \ne w$, that is both a prefix and a suffix of $w$.

A word $w \in \Al^*$ is \emph{square-free}
if it has no factors of the form $vv$ for nonempty words $v$. 
Axel Thue~\cite{Thue:12} showed in 1912 that there are
square-free words over a ternary alphabet
$\Al_3=\{0,1,2\}$. One such word is obtained by iterating
the following morphism $\tau\colon \Al_3^* \to \Al_3^*$ on the initial letter $0$:
\[
\tau(0)=012, \quad \tau(1)=02, \quad \tau(2)=1\,.
\]
The iteration ultimately gives an infinite square-free word
\[
\Hall = 012 02 1 012 1 02 012 \cdots
\]
that does not contain the short words $010$, $212$ and $01201$
as its factors. The infinite word $\Hall$ is sometimes called a
\emph{variation of Thue-Morse word}; see~\cite{Blanchet-Sadri:integers}.

\begin{lemma}\label{lem:overlap}
Let $x$ be a nonempty factor of a square-free word $w$.
Then ``$x$ does not overlap with itself in $w$'', meaning that if
$w= u x_1 x_2 x_3v$ where $x=x_1x_2=x_2x_3$
and $x_2 \ne \eps$ then $x_1=\eps=x_3$.
\end{lemma}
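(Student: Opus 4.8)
The plan is to argue by contradiction, producing a forbidden square inside $w$. Suppose the overlap is proper, say $x_1 \ne \eps$. First I would do the length bookkeeping: from $x = x_1x_2 = x_2x_3$ we get $|x_1| = |x_3|$, so let $k := |x_1| = |x_3| \ge 1$ and $n := |x|$. Since $x_1$ is a prefix of $x$ we have $n \ge k$; and $n = k$ would force $x_2 = \eps$, against the hypothesis, so in fact $n > k$.

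The heart of the argument is to observe that the factor $z := x_1x_2x_3$ of $w$, which has length $n+k$, has period $k$. Indeed, positions $1,\dots,n$ of $z$ spell the word $x_1x_2 = x$, while positions $k+1,\dots,n+k$ of $z$ spell the word $x_2x_3 = x$; equating these two occurrences of $x$ letter by letter yields $z_i = z_{i+k}$ for every $i$ with $1 \le i \le n = |z| - k$, which is exactly what it means for $z$ to have period $k$.

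Since $z$ has period $k$ and length $|z| = n+k > 2k$, its prefix of length $2k$ equals $(z_1\cdots z_k)(z_1\cdots z_k)$, a nonempty square occurring in $w$. This contradicts square-freeness of $w$. Hence $x_1 = \eps$, and then $x_3 = \eps$ as well because $|x_1| = |x_3|$.

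I do not anticipate any real obstacle: the argument is elementary index bookkeeping, and the only delicate point is the inequality $|z| \ge 2k$, which is precisely where the hypothesis $x_2 \ne \eps$ enters.
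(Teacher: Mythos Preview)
Your argument is correct. Both your proof and the paper's hinge on the same observation: the factor $z=x_1x_2x_3$ has period $|x_1|$ and length exceeding $2|x_1|$, so its prefix $x_1x_1$ is a square. The only difference is presentational: the paper invokes the standard structural lemma for equations $x_1x_2=x_2x_3$ (namely $x_1=rs$, $x_3=sr$, $x_2=(rs)^kr$, citing Lothaire) to exhibit $x_1x_2x_3=(rs)^{k+2}r$ explicitly, whereas you verify the period-$k$ property of $z$ directly by index bookkeeping. Your route is slightly more self-contained; the paper's is slightly more informative about the global shape of $z$. Either way the extracted square is the same, namely $x_1^2$.
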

\begin{proof}
Overlapping means, see e.g.~\cite{LothaireI}, that $x_1$ and $x_3$
are conjugates: $x_1=r s$, $x_3=sr$  and $x_2=(rs)^kr$ for some $r, s$
and $k \ge 0$.  But $x_1x_2x_3=(rs)^{k+2}r$ does contain a square
even if $k=0$.	
\end{proof}

\goodbreak
\section{Critical Factorisations}

We follow the main notations of~\cite{HarjuNowotka}.

An integer~$p$, with $1\leq p\leq |w|$, is a \emph{period}
of~$w$ if for the prefix $u$ of $w$ of length~$p$,
$w$ is a prefix of $u^n$ for some $n$.
The \emph{minimal period} of~$w$ is denoted by~$\per(w)$.
We have that $w$ is unbordered if and only if $\per(w)=|w|$.

An integer~$p$ with \hbox{$1\leq p < |w|$} is called 
a \emph{position} or a \emph{point} in~$w$. It denotes the place
after the prefix $x$ of length $p$: $w=x{\cdot}y$, $|x|=p$.
Thus there are $|w|-1$ positions in $w$.
A nonempty word $u$ is a \emph{repetition word} at~$p$
if there are words $x'$ and $y'$ (possibly empty) such that
$u=x'x$ or $x=x'u$, and $u=yy'$ or $y=uy'$.
If here $|u| > |x|$ (resp. $|u| > |y|$) then $u$ is said to have \emph{left overflow} (resp., \emph{right overflow}) at $p$;
see Fig.~\ref{Pic:reps}.

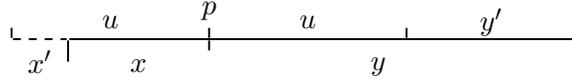
\begin{figure}[htb]
\tikzstyle{mystate}=[inner sep=0pt, outer sep=0pt]
\begin{center}
\begin{tikzpicture}[line width=0.06pc, scale=0.75](10,2)
\node  [mystate] (Z) at (0,0.5) {};
\node  [mystate] (A) at (1,0.5) {} ; 
\node  [mystate] (B) at (10,0.5) {};
\node [mystate]  (D) at (3.5,0.5) {};
\node  [mystate] (C) at (7,0.5) {};
\draw (A)--(B);
\draw [dashed] (Z)--(A);
\draw (Z)--(0,0.7);
\draw (A)--(1,0.1);
\draw (B)--(10,0.1);
\draw (D)--(3.5,0.3);
\draw (C)--(7,0.7);
\node at (1.75,0.8) {$u$};
\node at (5.25,0.8) {$u$};
\node at (6.5,0.02) {$y$};
\node at (0.5,0.1) {$x'$};
\node at (2.25,0.02) {$x$};
\node at (3.5,1.0) {$p$};
\node at (8.5,0.8) {$y'$};
\draw (D)--(3.5,0.7);
 \end{tikzpicture}
 \end{center}
 \caption{A repetition word $u$ of $w=xy$
having left overflow 
at position $p=|x|$.
}
 \label{Pic:reps}
\end{figure}

The length of a repetition word of $w$ at $p$ is 
called a \emph{local period} at~$p$.
The \emph{minimal local period} of $w$ at $p$ is denoted by
\[
\per(w,p) = \min\{ q \mid  q \text{ a local period of $w$ at } p \}.
\] 
Clearly, the (global) period $\per(w)$ is a local period at 
every point, and hence $\per(w,p) \le \per(w)$ for all $p$.
 A~position $p$ of $w$ is said to be \emph{critical} if $\per(w, p)=\per(w)$. 

The following result follows from the minimality assumption on $\per(w, p)$.

\begin{lemma}\label{lem:per}
A repetition word $u$ of $w$ at $p$ of length $\per(w,p)$
is unique and it is unbordered.
\end{lemma}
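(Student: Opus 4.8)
The plan is to establish the two assertions — uniqueness and unborderedness — separately, both as consequences of the minimality of $q := \per(w,p)$. Throughout write $w = x \cdot y$ with $|x| = p$, and read off the two defining clauses of a repetition word $u$ at $p$ in the following uniform way: the \emph{left} clause ``$u = x'x$ or $x = x'u$'' says exactly that $u$ and $x$ share a common suffix of length $\min(|u|,|x|)$, and the \emph{right} clause ``$u = yy'$ or $y = uy'$'' says exactly that $u$ and $y$ share a common prefix of length $\min(|u|,|y|)$. This reformulation is what makes both parts go through cleanly.

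For uniqueness, I would show that every letter of a repetition word $u$ of length $q$ at $p$ is forced by $w$ alone. Index the letters of $u$ by $1,\dots,q$. The right clause forces $u_k = w_{p+k}$ for $1 \le k \le \min(q,|y|)$ (these letters are copied from $y$), and the left clause, read by right-aligning $u$ against $x$, forces $u_k = w_{\,p-q+k}$ for $\max(1,\,q-p+1) \le k \le q$ (these are copied from $x$). It then remains to verify that the union of these two index ranges exhausts $\{1,\dots,q\}$; the only place a gap could appear is between $\min(q,|y|)$ and $\max(1,\,q-p+1)$, and a one-line computation shows that gap is empty precisely because $q = \per(w,p) \le \per(w) \le |w| = p + |y|$. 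Hence $u$ is completely determined by $w$ and $p$, so a repetition word of minimal length is unique.

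For unborderedness, suppose the (unique) length-$q$ repetition word $u$ at $p$ had a nonempty proper border $v$, i.e.\ $v$ is both a prefix and a suffix of $u$ with $v \neq u$, so $1 \le |v| \le q-1$. I claim $v$ is itself a repetition word at $p$, which contradicts the minimality of $q$. For the left clause of $v$: since $v$ is a suffix of $u$ and $u$ shares a suffix of length $\min(q,|x|)$ with $x$, either $|v|$ is at most this common length, in which case $v$ is a suffix of $x$ (so $x = v'v$), or $|v|$ exceeds it, which forces $\min(q,|x|)=|x|<|v|$ and hence $x$ is a suffix of $v$ (so $v = v'x$ with $v'$ nonempty). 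Symmetrically, using prefixes and $y$, the right clause of $v$ holds: $v$ is a prefix of $y$ (so $y = v v'$) or $y$ is a prefix of $v$ (so $v = y v'$). In every sub-case $v$ is a repetition word of length strictly less than $q = \per(w,p)$ — a contradiction. Therefore $u$ is unbordered.

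I do not expect a genuine obstacle here; this is the standard minimality argument underlying the definition of the minimal local period. The only point needing care is the bookkeeping: in the uniqueness part, checking that the two ``determined'' index ranges leave no gap, and in the unborderedness part, the case split according to whether $v$ is shorter or longer than $x$ (respectively $y$), which is exactly where overflow enters. In both parts the single quantitative input is the inequality $\per(w,p)\le \per(w)\le |w|$, and without it the uniqueness argument would indeed fail.
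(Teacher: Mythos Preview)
Your argument is correct. The paper does not actually prove this lemma: it merely states that the result ``follows from the minimality assumption on $\per(w,p)$'' and moves on. Your proposal is precisely the detailed unpacking of that one-line remark --- the unborderedness part is the standard observation that a border of a repetition word is again a repetition word, and your uniqueness part correctly isolates the one nontrivial ingredient, namely that $\per(w,p)\le\per(w)\le|w|$ is needed to rule out an undetermined gap in the middle of $u$ when $u$ overflows on both sides. So there is nothing to compare: you have supplied the proof the paper omitted, along exactly the lines it indicated.
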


For a word $w$, we let
\[
\Cr(w) = \text{the number of critical points of $w$}.
\]
The number
\[
\frac{\Cr(w)}{|w|-1}
\]
is called the \emph{density} of the critical points in $w$.

\begin{example}
Let $w=0120201202021021021$ be an unbordered 
word of length 19, i.e., $\per(w)=|w|$. It is not square-free.
The minimal local periods of $w$ are in order of the 18 positions
\[ 
3, 5, 5, 2, 5, 5, 19, 19, 2, 2, 19, 19,3,3,3,3,3,3\,.
\]
In this example, $\Cr(w)=4$, and the density of critical points is
$4/18 = 0.222\dots$
\qed\end{example}

The Critical Factorisation Theorem is due to C\'esari
and {Vin\-cent}~\cite{CesariVincent:78}.
The present form of the theorem was developed by Duval \cite{Duval:79}; for the proofs,
see also ~\cite{CrochemorePerrin:91},~\cite{HarjuNowotka}
and Chapter~$8$ in~\cite{LothaireII}.

\begin{theorem}[Critical Factorisation Theorem]\label{CFT}
   Every word $w$ of length $|w|\geq 2$ has a critical point.
  Moreover, there is a critical point $p$ satisfying $p \le \per(w)$.
\end{theorem}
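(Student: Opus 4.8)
The plan is to prove the Critical Factorisation Theorem by the classical maximal-suffix argument of C\'esari--Vincent and Duval, based on two \emph{opposite} linear orders on the alphabet.

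Fix a linear order $\leq$ on $\Al$ and extend it to the lexicographic order on $\Al^\ast$, with the convention that a proper prefix of a word precedes that word; let $\leq'$ be the lexicographic order obtained by reversing $\leq$ on letters. If $w=a^n$ for a single letter $a$, then $\per(w)=1$ and every point is critical, so the statement holds trivially; hence assume $w$ contains two distinct letters. Let $v$ be the $\leq$-maximal suffix of $w$ and $v'$ the $\leq'$-maximal suffix. Since $w$ is not unary, the first letter of $v$ is the $\leq$-largest letter occurring in $w$ and the first letter of $v'$ is the $\leq$-smallest, so $v\neq v'$; relabelling the two orders if necessary, we may assume $v$ is the \emph{shorter} of the two, hence a proper suffix of $v'$ and a proper suffix of $w$. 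Write $w=xv$ and put $p=|x|$, so $1\le p\le|w|-1$, and write $v'=zv$ with $z$ nonempty. First I dispatch the ``moreover'' part: if $p>\per(w)$, then since $\per(w)$ is a period, the suffix $s$ of $w$ starting $\per(w)$ positions before $v$ begins has $v$ as a \emph{proper} prefix (the letters of $v$ are reproduced $\per(w)$ places earlier), whence $s>v$, contradicting maximality of $v$. Thus $p\le\per(w)$.

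It remains to show $\per(w,p)=\per(w)$. As $\per(w,p)\le\per(w)$ always, suppose for contradiction that $\rho:=\per(w,p)<\per(w)$, and let $u$ be the repetition word at $p$ of length $\rho$, which by Lemma~\ref{lem:per} is unique and unbordered; note $\rho<\per(w)\le|w|$. I argue by cases on whether $u$ overflows $x$ on the left ($\rho>p$) and/or $v$ on the right ($\rho>|v|$). If there is no right overflow then $u$ is a prefix of $v$; if in addition there is no left overflow then $u$ is a suffix of $x$, so $uv$ is a suffix of $w$, and comparing $uv=u\cdot v$ with $v=u\cdot y'$ (where $y'$ is a proper suffix of $v$) after their common prefix $u$ forces $v\le y'$, impossible. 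If there is right overflow then $v$ is a prefix of $u$; if there is no left overflow, the suffix of $w$ starting $\rho$ positions before $v$ equals $uv$, which has $v$ as a proper prefix and so exceeds $v$, a contradiction; and if there is also left overflow, then $u$ has $x$ as a suffix and $v$ as a prefix with $|x|+|v|=|w|>\rho$, so these two occurrences overlap inside $u$ in a common nonempty factor $o$ of length $|w|-\rho$, which is then at once a suffix of $v$ and a prefix of $x$; hence $o$ is a border of $w$ and $\rho$ is a period of $w$, contradicting $\rho<\per(w)$.

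The one remaining case, $u$ overflowing $x$ on the left but not $v$ on the right, is the main obstacle. Here $u=ax$ with $a$ nonempty, so $w$ begins with $xax$ and this prefix has period $\rho$; the copy of $u$ inside $v$ together with the conjugate copy $xa$ at the front of $w$ must be reconciled with the $\leq'$-maximality of $v'=zv$, which extends $v$ to the left. The idea is that the periodicity just produced, pushed leftwards using $z$, exhibits either a suffix of $w$ that is $\leq'$-larger than $v'$ or a border of $w$ of length $|w|-\rho$, giving a period of $w$ below $\per(w)$; carrying out this bookkeeping — tracking the relative lengths of $a$, $x$, $z$ and of the period block of $v$, and using Lemma~\ref{lem:per} (and, if needed, the Fine--Wilf periodicity lemma) to preclude borders of $u$ — is the technical heart of the argument. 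Once every case has been brought to a contradiction, $\rho<\per(w)$ is impossible, so $\per(w,p)=\per(w)$ and $p$ is a critical point with $p\le\per(w)$, as claimed.
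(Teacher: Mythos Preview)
The paper does not prove Theorem~\ref{CFT}; it merely states it and cites C\'esari--Vincent, Duval, Crochemore--Perrin, and Lothaire for proofs. So there is no ``paper's own proof'' to compare against, and your choice of the maximal-suffix argument is exactly the classical one those references give. Your treatment of the three easy overflow cases and of the bound $p\le\per(w)$ is fine (the both-overflows case is precisely the paper's Lemma~\ref{LRover}).

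The genuine gap is that you do not prove the remaining case at all. Saying that ``carrying out this bookkeeping \ldots\ is the technical heart of the argument'' is a description of work, not the work itself; and invoking Fine--Wilf ``if needed'' signals that you have not actually located the contradiction. In fact no Fine--Wilf is required, and the missing argument is short once you see it. With $u$ a prefix of $v$, write $v=uv_2$; since $z$ is a suffix of $x$ and $x$ a suffix of $u$, $z$ is a suffix of $u$, so $zv_2$ is a suffix of $v$ and hence of $w$. Now split on whether $v_2$ is a prefix of $v$. If it is (this includes $v_2=\eps$), then $v$ has period $|u|$; together with $x$ being a suffix of $u$ one checks directly that $w=xv$ has period $|u|=\rho$, contradicting $\rho<\per(w)$. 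If it is not, then $v$ and $v_2$ first differ at some position where, by $\le$-maximality of $v$, the letter of $v$ is $\le$-larger; under the reversed order this means $v_2>'v$, whence $zv_2>'zv=v'$, contradicting the $\le'$-maximality of $v'$. That is the whole case; supply these two paragraphs and your proof is complete.
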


\EXTRA{
\begin{theorem}\label{thm.cft2}
   Each set of $\per(w)-1$ consecutive points in $w$, where
   $|w|\geq 2$, has a~critical point.
\end{theorem}
}

Later, in the statements of the results, we assume that
$|w| \ge 2$ to avoid the trivial exceptions.

\begin{lemma}\label{LRover}
Let $u$ be a repetition word of $w$ at $p$ of length $\per(w,p)$.
 If $u$  has both left and right overflows at $p$
then $p$ is a critical point.	
\end{lemma}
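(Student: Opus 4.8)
The plan is to argue by contradiction: suppose $u$ is the (unique, unbordered) repetition word at $p$ of length $q = \per(w,p)$ with both left and right overflow, yet $p$ is not critical, so $q < \per(w)$. Writing $w = x \sdot y$ with $|x| = p$, the two overflows mean $u = x' x = y y'$ with $x', y'$ nonempty, so $u$ is strictly longer than each of $x$ and $y$; in particular $|u| > |x|$ and $|u| > |y|$. The immediate consequence I want to extract is that $u$ occurs in $w$ \emph{straddling} the point $p$: the suffix $x$ of $u$ sits as a suffix of the prefix-part and the prefix $y$ of $u$ sits as a prefix of the suffix-part, and since together $x$ and $y$ cover all of $w$ around $p$ while $u$ is longer than both, the word $w$ itself is a factor of $u \cdot (\text{something})$ — more precisely I will show $w$ is contained in a single period-block, i.e. $|w| \le$ (two copies of $u$ overlapped appropriately), forcing a short global period.

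The key step is the periodicity computation. From $u = x'x$ we get that $w = x'^{-1}u \cdot y = $ has $u$ appearing with its tail aligned at position $p$; from $u = yy'$ we get $u$ appearing with its head aligned at position $p$. Overlaying these two occurrences of the same word $u$ at a relative shift of $|x'| = |u| - |x| =: s$ (with $0 < s < |u|$, the strict inequality on the right coming from $x \ne \eps$ since $p \ge 1$), the Fine–Wilf-type overlap argument shows the union of the two occurrences has period $\gcd$-related to $s$; but more simply, the two occurrences of $u$ overlap in a word of length $|u| - s = |x| > 0$, so they overlap nontrivially, and hence the stretch of $w$ they jointly cover has period $s \le |u| = q < \per(w)$. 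If that jointly covered stretch is all of $w$, we contradict minimality of $\per(w)$ directly. I therefore need to check that the two occurrences of $u$ together span $w$: the left occurrence starts at position $p - |x| - |x'| = p - |u|$... — rather, it \emph{ends} at position $p$ having started at $p - |u| < p - |x| = 0$ only if — here I must be careful that the left occurrence may hang off the left end of $w$, and symmetrically on the right; but that is precisely the meaning of ``overflow,'' and the overflowing parts $x'$, $y'$ are consistent with $u$'s own letters, so $w$ really is a factor of an infinite word of period $q$, giving $\per(w) \le q$, the contradiction.

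The main obstacle, and the step deserving the most care, is handling the overflows cleanly: a priori $x'$ is only asserted to be ``some word'' making $u = x'x$, not a factor of $w$ to the left of $p$, so I cannot literally read $u$ off of $w$ on the left. The fix is to note that the right occurrence of $u$ (the one with $u = yy'$, hence $u$ is a prefix of $y$ extended, living inside $w$ starting exactly at position $p - 0$... i.e. $u$ starts at the point $p$ if $x = \eps$ — but $x \ne \eps$) — so I instead use: the suffix of length $|x|$ of $u$ equals $x$ = the length-$|x|$ prefix of $w$, and the prefix of length $|y|$ of $u$ equals $y$ = the length-$|y|$ suffix of $w$; since $|u| = q$ and $|x| + |y| = |w|$ with $|x|, |y| < q$, we have $|x| + |y| - q = |w| - q$ letters of $x$ and $y$ overlapping inside $u$, and this consistency forces every letter of $w$ to be determined by $u$ periodically with period $q$. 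Writing this out as the statement ``$w$ is a factor of $u^\omega$'' (equivalently $w_i = u_{((i-1) \bmod q)+1}$ for a suitable alignment) yields $\per(w) \le q = \per(w,p)$, hence equality, hence $p$ is critical. I expect the alignment bookkeeping — getting the indices and the ``$x$ is a suffix of $u$, $y$ a prefix of $u$, and they overlap in the middle'' picture exactly right — to be the only genuinely fiddly part.
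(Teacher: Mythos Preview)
Your argument is correct and is essentially the paper's approach: both show directly that $\per(w)\le |u|$ from the observation that $x$ is a suffix of $u$ and $y$ is a prefix of $u$, whence $w=xy$ is a factor of $uu$ and so has period $|u|$. The paper packages this same observation as an explicit border computation---after a WLOG $|x'|\le|y|$ it writes $y=x'z$, $x=zy'$, so $w=zy'x'z$ and $|u|=|zy'x'|$ is a period---which is just your ``$w$ is a factor of $u^\omega$'' made concrete; your contradiction framing is unnecessary but harmless.
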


\begin{proof}
Let $w=x y$ where $u=x'x=yy'$ for nonempty words $x', y'$;
see Fig.~\ref{Pic:Reps2}.
By symmetry, we may assume that $|x'| \le |y|$ (otherwise
$|y'| \le |x|$).
Therefore $y=x'z$ and $x=z y'$ for some $z$.
Now, $w=x y= zy'x'z$, and hence $|zy'x'|$ is a period
of $w$, i.e.,  $\per(w) \le |zy'x'|$. But $|zy'x'|=|x'zy'|=|u|$ which shows that $\per(w,p)= |u|=\per(w)$
implying that $p$ is a critical point.
\end{proof}

\begin{figure}[htb]
\unitlength=5mm
\tikzstyle{mystate}=[inner sep=0pt, outer sep=0pt]
\begin{center}
\begin{tikzpicture}[line width=0.06pc](10,1)
\node  [mystate] (Z) at (0,0.5) {};
\node  [mystate] (A) at (2,0.5) {} ; 
\node  [mystate] (B) at (9,0.5) {};
\node[mystate] (D) at (8,0.5) {};
\node  [mystate] (C) at (4.5,0.5) {};
\draw (A)--(D);
\draw [dashed] (Z)--(A);
\draw [dashed] (D)--(B);
\draw (Z)--(0,0.7);
\draw (A)--(2,0.7);
\draw (B)--(9,0.7);
\draw (D)--(8,0.7);
\node at (1,0.8) {$x'$};
\draw (C)--(4.5,0.7);
\node at (3.25,0.8) {$x=zy'$};
\node at (8.5,0.8) {$y'$};
\node at (6.25,0.8) {$y=x'z$};
\node at (4.55,0.2) {$w$};
\draw (A)--(2,0.3);
\draw (D)--(8,0.3);
 \end{tikzpicture}
 \end{center}
 \caption{Left and right overflows imply criticality.}
 \label{Pic:Reps2}
\end{figure}
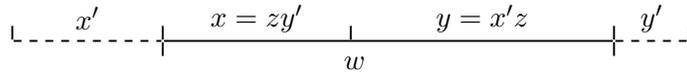

\section{Maximum number of critical points}

The next theorem follows from the observation that 
if a point $p$ of $w$ has neither left nor right overflow,
the minimal repetition word $u$ at $p$ supplies
a square $uu$ in~$w$.

\begin{theorem}\label{thm:over}
A word $w$ is square-free if and only if 
each repetition word at each position $p$ has left 
or right overflow, or both.
\end{theorem}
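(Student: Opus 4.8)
The plan is to prove both implications by contraposition, reading off everything from the definitions of repetition word and overflow; the observation already flagged before the statement does all the work.

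\emph{Direction ``square-free $\Rightarrow$ every repetition word has an overflow''.} I would assume the condition fails: there is a position $p$, with factorisation $w=x\sdot y$ and $|x|=p$, and a repetition word $u$ at $p$ that has neither left nor right overflow. ``No left overflow'' means $|u|\le|x|$; matching this against the two clauses ``$u=x'x$ or $x=x'u$'' in the definition, the applicable one is $x=x'u$, i.e.\ $u$ is a suffix of $x$ (when $|u|=|x|$ this just says $u=x$). Symmetrically ``no right overflow'' means $|u|\le|y|$, hence $y=uy'$, i.e.\ $u$ is a prefix of $y$. Concatenating, $w=xy=x'\,u\,u\,y'$, so $uu$ is a factor of $w$; since repetition words are nonempty by definition, $u\ne\eps$ and this square is genuine, contradicting square-freeness. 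Therefore, if $w$ is square-free, every repetition word at every position must have left or right overflow (or both).

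\emph{Direction ``every repetition word has an overflow $\Rightarrow$ square-free''.} Again by contraposition: suppose $w$ is not square-free, so $w=\alpha\,v\,v\,\beta$ for some nonempty $v$ and possibly empty $\alpha,\beta$. Put $p=|\alpha|+|v|$ and consider $w=x\sdot y$ with $x=\alpha v$ and $y=v\beta$. Then $v$ is both a suffix of $x$ and a prefix of $y$, so taking $x'=\alpha$ and $y'=\beta$ shows that $v$ is a repetition word at $p$. Moreover $|v|\le|x|$ and $|v|\le|y|$, so $v$ has neither left nor right overflow at $p$, contradicting the hypothesis. Hence the condition forces $w$ to be square-free.

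\emph{Main obstacle.} There is no substantial difficulty here; the statement is essentially a repackaging of the definitions. The only point that needs care is the bookkeeping between the inequality $|u|\le|x|$ (respectively $|u|\le|y|$) and which of the two clauses in the definition of a repetition word is in force, together with the boundary case $|u|=|x|$, where $u=x$ is still a nonempty word and still yields an honest square $uu$ in $w$. I would make this conversion explicit once and the rest follows immediately.
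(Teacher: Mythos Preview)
Your proof is correct and follows exactly the route the paper indicates: the paper does not give a separate proof environment for this theorem but only the sentence preceding it, namely that a repetition word with neither overflow yields a square $uu$ in $w$; you have simply written out both contrapositives explicitly and handled the trivial bookkeeping. There is nothing to add.
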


\begin{example}
The square-free word 
$w=0102 0120 2102 0102 1$ of length 17
is unbordered, i.e., $\per(w)=17$. It has 9 critical points
at the consecutive positions  $p=5, 6, \ldots, 13$.
This gives the density number $9/16\approx 0.56$.
For instance, the position $p=4$ has the minimal repetition word
$u=012021020102$, since
$u$ is the shortest factor
after the prefix $0102$ that ends with $0102$.  Thus $\per(w,4)=12$. 
\qed\end{example}

For a word $w$, let
\[
M(w)= \Big\lfloor \frac{|w|+1}{2} \Big\rfloor 
\]
denote the \emph{midpoint} of $w$. For odd length $|w|$,
it is just a choice of the two points nearest to the centre of $w$.

\begin{lemma}\label{lemma:mid}
For a square-free word $w \in \Al_3^*$, the position $M(w)$ is critical.
\end{lemma}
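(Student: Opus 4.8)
The plan is to show that the midpoint $p = M(w)$ cannot fail to be critical, by arguing that if it did, then $w$ would contain a square. By Lemma~\ref{lem:per} there is a unique unbordered minimal repetition word $u$ at $p$ with $|u| = \per(w,p)$, and by Lemma~\ref{LRover} it suffices to rule out the cases where $u$ has only a left overflow or only a right overflow (the case with no overflow is impossible by Theorem~\ref{thm:over}, since $w$ is square-free). By the left-right symmetry (reversing $w$ also reverses a square-free word and preserves the midpoint up to the two-point choice), it is enough to handle the case where $u$ has a right overflow but no left overflow.

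Write $w = x \sdot y$ with $|x| = p = M(w)$. A right-overflow-only repetition word means $x = x' u$ for some nonempty $x'$ (so $u$ is a suffix of $x$ and $|u| \le |x|$) and $y = u y'$ or $u = y y'$; the ``no left overflow'' says $|u| \le |x|$, which we already have. The key numeric observation is that $|x| = M(w) = \lfloor (|w|+1)/2 \rfloor \le |y| + 1$, so $x$ is at most one letter longer than $y$; hence $u$, being a suffix of $x$ of length at most $|x|$, and simultaneously (after the overflow-analysis) interacting with the prefix side $y$ of comparable length, forces the occurrence of $u$ at the end of $x$ to overlap, or abut, another occurrence of $u$ coming from the $y$-side of $p$. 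I would make this precise by splitting on whether $|u| \le |y|$ (so $y = u y'$) or $|u| > |y|$ (so $u = y y'$, a left overflow on the $y$-prefix): in the first subcase $u$ occurs as a suffix of $x$ and as a prefix of $y$, so $uu$ sits across position $p$ — a square, contradiction; in the second subcase $u = y y'$ with $|y'| < |u| \le |x|$, and since $x$ ends in $u = y y'$ while $y$ is a prefix of $x$'s tail region (because $|u| > |y|$ and $|u| \le |x|$ means $x$ contains $u$ as a suffix and $u$ starts with $y$), we again get two overlapping copies of $y$ inside $w$, and Lemma~\ref{lem:overlap} turns any nontrivial overlap of the square-free factor $y$ into a square.

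The main obstacle I anticipate is the bookkeeping in the borderline second subcase ($u = yy'$, i.e.\ the repetition word overflows on the $y$-side too but not on the $x$-side) together with the off-by-one slack coming from odd $|w|$. One has to be careful that the two copies of $y$ (or of $u$) genuinely overlap rather than merely touch end-to-end — end-to-end abutting of $y$ with $y$ would already be the square $yy$, which is fine, but a more careful case could instead produce an overlap of $u$ with itself, at which point Lemma~\ref{lem:overlap} (which says a factor of a square-free word cannot overlap itself nontrivially) finishes the argument. A clean way to organise this is: from the assumption that $p$ is \emph{not} critical, deduce via Lemma~\ref{LRover} that $u$ has at most one overflow, say right only; then the inequality $|x| \le |y| + 1$ forces $|u| \le |x| \le |y|+1$, and a short case analysis on $|u|$ versus $|y|$ produces in every case either a square $uu$, a square $yy$, or a self-overlap of $u$ or $y$ — all contradicting square-freeness via Lemma~\ref{lem:overlap}. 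Hence $p = M(w)$ is critical.
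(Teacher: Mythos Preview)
Your plan works cleanly for even $|w|$ (where $|x|=|y|$ forces both overflows or none), but it breaks down in the odd case. Take $|w|=2k+1$, so $|x|=k+1$ and $|y|=k$. In the right-overflow-only situation you correctly get $|y|<|u|\le|x|$, hence $|u|=k+1$, $x'=\eps$, $x=u$, and $u=ya$ for a single letter $a$; thus $w=xy=yay$. But such a word need \emph{not} contain any square, any $yy$, or any self-overlap of $y$ or $u$: already $w=010$ (with $y=0$, $a=1$) is square-free. The two copies of $y$ in $yay$ are separated by $a$, not overlapping, so Lemma~\ref{lem:overlap} gives you nothing, and your promised contradiction never materialises.

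The missing idea is that in this borderline case one should not hunt for a square but compute the global period directly: from $w=yay$ one reads off that $|ya|=|u|$ is a period of $w$, so $\per(w)\le|u|=\per(w,p)\le\per(w)$, and $p$ is critical after all. This is precisely the paper's argument for odd length. Two smaller slips along the way: your ``nonempty $x'$'' is wrong (in the decisive case $x'=\eps$), and the symmetry reduction is unnecessary since left-overflow-only is vacuous at $M(w)$ whenever $|x|\ge|y|$.
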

\begin{proof}
For even $|w|$, the claim is clear from Theorem~\ref{thm:over}.

Suppose then that $|w|=2k+1$, and let $u$ be the minimal local repetition word of $w$ at $M(w)=k+1$.
Suppose $u$ has right but not left overflow. Then $|u| = k+1$, and 
hence $w=v a v$ where $u=v a$ for a prefix $v$ and an overflow letter $a$.
But then $\per(w, k+1)=|u|=\per(w)$, and the claim follows.
\end{proof}

\begin{theorem}\label{thm:1}
The minimal local periods form
a unimodular sequence for square-free ternary words $w \in \Al_3^*$, i.e.,
\begin{align*}
&\per(w,p-1) \le \per(w,p) \ \text{ for } \ p \le M(w)\\
&\per(w,p) \le \per(w,p-1) \ \text{ for } \ p \ge M(w)\,.
\end{align*}
In particular,
the critical points $p$ of $w$ form an interval $q_0 \le p \le q_1$  
for some $q_1 \le M(w)$ and 
$q_2 \ge M(w)$.
\end{theorem}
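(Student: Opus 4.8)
The plan is to establish the two monotonicity inequalities on the left and right halves separately; by the mirror symmetry (reversal) of square-free words and of the notions involved, it suffices to treat the left half, i.e. to show $\per(w,p-1) \le \per(w,p)$ whenever $p \le M(w)$. So fix such a $p$ and let $u$ be the minimal repetition word at $p$, with $|u| = \per(w,p)$; write $w = x\cdot y$ with $|x| = p$. First I would dispose of the easy case: if $u$ has left overflow at $p$ (i.e. $u = x'x$ with $x'$ nonempty), then $u$ is automatically a repetition word at $p-1$ as well — peeling the last letter off $x$ to move to position $p-1$ only lengthens the available ``$x$-side,'' so $u = x''(x\text{ shortened})$ still holds, and on the $y$-side $u$ was already a prefix of $y$ (or $y$ a prefix of $u$), which is only more true after prepending a letter to $y$. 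Hence $\per(w,p-1) \le |u| = \per(w,p)$ in this case.

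The substantive case is when $u$ has \emph{right} overflow but \emph{no} left overflow at $p$, so that $|u| \le |x|$ and in fact $u$ is a suffix of $x$, say $x = zu$, while $u = y y'$ with $y'$ nonempty. Since $p \le M(w)$ we have $|x| \le |y|$ (up to the usual one-letter slack when $|w|$ is odd, which I would handle by the same device as in Lemma~\ref{lemma:mid}), so $|u| \le |x| \le |y|$, i.e. $u$ is a prefix of $y$: write $y = u t$. The key point is to produce a short repetition word at $p-1$. Let $a$ be the last letter of $x$, so $x = x_0 a$ with $|x_0| = p-1$. Because $u$ has no left overflow at $p$, the letter $a$ is the last letter of $u$; and because $u$ occurs again as the prefix of $y$, the letter $a$ also sits just before $y$... more precisely I want to exhibit a word of length $\le |u|$ that is a suffix of (or contains as a suffix) $x_0$ and a prefix of (or is prefixed by) $a y = a u t$. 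The natural candidate is the length-$|u|$ suffix of $x_0 a u = x_0 \cdot (\text{the two consecutive copies of }u)$: the overlap structure $x = zu$, $y = ut$ means that around position $p$ the word reads $\ldots z\,u\,u\,t\ldots$, so moving one step left to $p-1$ we are inside the block $u u$, and Lemma~\ref{lem:overlap} (no word overlaps itself in a square-free word) forces this configuration to be extremely rigid. This is exactly where the ternary square-free hypothesis enters and where I expect the real work to be: ruling out that the minimal repetition word at $p-1$ could be \emph{longer} than $u$ by using that any long repetition word at $p-1$ would, together with $u$, create either a square or a self-overlap of a factor, contradicting Lemma~\ref{lem:overlap} or square-freeness directly. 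I would expect to need a short case analysis on the alignment of $u$'s second occurrence against the letter $a$, using that $\Al_3$ has only three letters so the forced letters leave essentially no freedom.

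Once both inequalities are in hand, the statement about the critical points forming an interval is immediate: $p$ is critical iff $\per(w,p) = \per(w)$, and since $\per(w,p) \le \per(w)$ always (as noted before Lemma~\ref{lem:per}), unimodularity says the sequence $\per(w,\cdot)$ rises weakly up to $M(w)$ and falls weakly after it, so the set of points attaining the maximum value $\per(w)$ — which is attained at $M(w)$ by Lemma~\ref{lemma:mid} — is an interval containing $M(w)$; writing it as $q_0 \le p \le q_1$ gives $q_0 \le M(w) \le q_1$ as claimed (with the stated $q_1,q_2$ playing the roles of the endpoints relative to $M(w)$). The main obstacle, to repeat, is the right-overflow-only case: turning the geometric picture ``$p-1$ lies inside a $uu$-type block'' into a clean bound $\per(w,p-1)\le\per(w,p)$ without the square-free constraint being violated, and I anticipate that Lemma~\ref{lem:overlap} will be the workhorse there.
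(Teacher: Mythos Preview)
Your ``easy case'' is actually wrong: when $u = x'x$ has left overflow at $p$, the word $u$ is \emph{not} a repetition word at $p-1$. Write $x = x_0 a$ with $a$ the last letter, so that the factorisation at $p-1$ is $w = x_0 \cdot (ay)$. For $u$ to serve on the left you would need $x_0$ to be a suffix of $u$; but $u$ ends in $a$, so this forces $x_0$ to end in $a$ as well, producing the square $aa$ inside $x = x_0 a$. On the right you would need $u$ to be a prefix of $ay$ (or conversely); but $u$ begins with the first letter of $y$, and square-freeness gives $a \ne y[1]$, so neither inclusion holds. Thus $u$ fails on both sides at $p-1$, and your sentence ``which is only more true after prepending a letter to $y$'' is exactly backwards.

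The paper's device here is to pass to the \emph{conjugate}: writing $u = va$, one checks that $av$ is a repetition word at $p-1$. Indeed $x_0$ is a suffix of $v$ (since $x = x_0 a$ is a suffix of $u = va$), hence of $av$; and if $u = va$ is a prefix of $y$ then $av$ is a prefix of $ay$, while if instead $y$ is a prefix of $u$ then $u$ has both overflows and $p$ is critical by Lemma~\ref{LRover}, whence $\per(w,p-1) \le \per(w) = \per(w,p)$ trivially. Either way $\per(w,p-1) \le |av| = |u| = \per(w,p)$.

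Your ``substantive case'' (right overflow but no left overflow) is essentially vacuous for $p \le M(w)$, and your own setup exposes this: you assume $u = yy'$ with $y' \ne \eps$ (so $|u| > |y|$) and two lines later conclude $|u| \le |x| \le |y|$ and write $y = ut$, a flat contradiction. For even $|w|$ the case cannot occur at all; for odd $|w|$ it survives only at $p = M(w)$ with $|u| = |x| = |y|+1$, precisely the situation of Lemma~\ref{lemma:mid}, where $p$ is already critical. So no overlap analysis via Lemma~\ref{lem:overlap} is needed: the paper simply observes that square-freeness forces left overflow for $p \le M(w)$ (Theorem~\ref{thm:over}), and then applies the conjugate trick above. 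Your handling of the interval consequence at the end is fine.
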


\begin{proof}
Let $2\le p \le M(w)$.
The cases for $p \ge M(w)$ follow by considering the reverse of the word $w$ which is also square-free.
Let the minimal repetition word of $w$ at $p$ be $u$, i.e.,
$|u| = \per(w, p)$. Since $w$ is square-free and $|u| \ge 2$, 
$u$ has left overflow. If it also has right overflow then
$p$ is critical by Lemma~\ref{LRover}.
Let $a$ be the letter such that $u=v a$.
Then $|a v|$ is a local period at $p-1$ since the position $p-1$
has a repetition word $a v$. (It need not be minimal.)
Hence
$\per(w,p-1) \le \per(w,p)$.

\smallskip

For the second claim, by Lemma~\ref{lemma:mid}, $w$ has a critical point $p$ with $p\le M(n)$ and a critical point
$q \ge M(w)$. This proves the claim.
 \end{proof}

\begin{example}
Consider the prefix $w=\tau^5(0)$ of the square-free word $\Hall$,
i.e.,
\[ 
w=	012 02 1 012 1 02 012 02 1 02 012 1 .
\]
It is unbordered with $|w[=24$.
The sequence of the 23 minimal local periods is
\[
3,6,6,12,12,12,12, 24, \ldots, 24, 14,14,6,2 .
\]
Thus $\Cr(w)=12$, i.e., just over one half
of the positions are critical.
\qed\end{example}

\begin{theorem}\label{thm:low}
For each square-free ternary word $w$ of length $|w| \ge 26$,
we have $\Cr(w) \le |w|-5$. 
\end{theorem}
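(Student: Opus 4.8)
The plan is to single out four points of $w$ that can never be critical, namely the two outermost points at each end: $p=1$, $p=2$, $p=|w|-2$, $p=|w|-1$. Since $|w|\ge 26$, these are four distinct points among the $|w|-1$ points of $w$, so $\Cr(w)\le(|w|-1)-4=|w|-5$ follows immediately. To certify that a point $p$ is non-critical it suffices to show $\per(w,p)<\per(w)$. Here I will use that a square-free word satisfies $\per(w)>\tfrac12|w|$ --- indeed if $\per(w)=p\le\tfrac12|w|$ then the prefix $u$ of length $p$ gives $u^2$ as a prefix of $w$, a square --- so $\per(w)\ge 14$ once $|w|\ge 26$. Thus the whole task reduces to the two local-period estimates $\per(w,1)\le 13$ and $\per(w,2)\le 13$, applied in addition to the reversal $\widetilde w$ (again square-free over $\Al_3$) to handle $p=|w|-1$ and $p=|w|-2$.

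\textbf{The point $p=1$.}
Write $w=w_1w_2\cdots w_n$. Let $k\ge 3$ be the least index with $w_k=w_1$; such $k$ exists with $k\le 6$, because a binary factor of a square-free word has length at most $3$, so $w_1$ must occur among $w_3,w_4,w_5,w_6$. Then $u=w_2w_3\cdots w_k$ is a repetition word at $p=1$: it is a prefix of the suffix $w_2w_3\cdots$, it ends in the letter $w_1$, which is the length-$1$ prefix $x$, and $|u|=k-1\ge 2>1=|x|$, so it has left overflow. Hence $\per(w,1)\le k-1\le 5<14\le\per(w)$, and $p=1$ is not critical.

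\textbf{The point $p=2$.}
The idea is that the length-$2$ prefix $w_1w_2$ reoccurs in $w$ very early. The combinatorial input is the following lemma: a square-free word over $\Al_3$ that avoids a fixed factor $ab$ with $a\ne b$ has length at most some small explicit constant $L$. The reason is that such a word uses neither $ab$ nor $aa$, so each occurrence of $a$ (except possibly the last letter) is followed by the third letter $c$; decomposing the word into the resulting blocks, separated by square-free binary words over $\{b,c\}$ (which have length $\le 3$), a short case analysis shows that two such blocks already force a square, which bounds the length. Granting this with the correct value of $L$ (one expects $L=12$, which is what fixes the threshold $26$), consider the factor $w_3w_4\cdots w_{L+3}$ of $w$ (well defined since $|w|\ge L+3$): being a square-free ternary word of length $L+1>L$, it must contain $w_1w_2$, say starting at position $j$. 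Since $w$ is square-free, $j$ cannot be $2$ or $3$ (that would yield the square $(w_1w_2)^2$), so $4\le j\le L+2$. Now $u=w_3w_4\cdots w_{j+1}$ is a repetition word at $p=2$: it is a prefix of the suffix $y=w_3w_4\cdots$, it ends in $w_jw_{j+1}=w_1w_2$, which is the length-$2$ prefix $x$, and $|u|=j-1\ge 3>2=|x|$, so it has left overflow. Therefore $\per(w,2)\le j-1\le L+1=13<14\le\per(w)$, and $p=2$ is not critical.

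\textbf{Conclusion and main obstacle.}
Applying the two estimates above to $\widetilde w$ shows that $p=|w|-1$ and $p=|w|-2$ are non-critical as well; these four points are pairwise distinct because $|w|\ge 26$. Hence at most $|w|-5$ of the $|w|-1$ points of $w$ are critical, which is the assertion. The crux of the argument is the lemma on square-free ternary words avoiding a single letter-pair --- in particular determining its exact extremal length, since that is what pins down the numerical threshold $|w|\ge 26$ --- whereas the two repetition-word constructions and the passage to the reversal are routine.
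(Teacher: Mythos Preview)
Your argument is correct and in fact slightly cleaner than the paper's. Both proofs rest on the very same extremal fact---the longest square-free ternary word avoiding a fixed two-letter factor $ab$ (with $a\ne b$) has length $L=12$; the paper states this as ``$\alpha=0121021202102$ is the longest unbordered square-free word having $01$ only as its prefix'', which is just the statement $L=12$ with a $0$ prepended. Neither you nor the paper actually proves this finite fact, so your flagged ``main obstacle'' is exactly the step the paper also takes for granted.

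Where the two arguments differ is in the packaging. You go \emph{directly}: from $L=12$ you produce an explicit repetition word at $p=2$ of length $\le 13$, and since $\per(w)>|w|/2\ge 13$ this forces $p=2$ (and by reversal $p=|w|-2$) to be non-critical; together with the easy points $1$ and $|w|-1$ you get four distinct non-critical points. The paper instead argues by contradiction and needs the unimodularity result (Theorem~\ref{thm:1}): assuming at most three non-critical points, unimodularity pins down the non-critical set inside $\{1,2,n-2,n-1\}$, and after a reversal one may assume $p=2$ is \emph{critical}; then the minimal repetition word at $2$ forces $01$ not to recur inside the period prefix $01x$, whence $|01x|\le|\alpha|=13$, contradicting $\per(w)\ge 14$. (The paper's phrase ``by the assumption that $p=2$ is not critical'' appears to be a slip for ``is critical''.) Your route avoids Theorem~\ref{thm:1} entirely and in addition yields the sharper conclusion that \emph{each} of $p=1,2,|w|-2,|w|-1$ is non-critical whenever $|w|\ge 26$, not merely that four non-critical points exist. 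A minor tightening: in your $p=1$ step one actually gets $k\le 5$ (any length-four factor of a ternary square-free word contains every letter), hence $\per(w,1)\le 4$; this matches the paper's remark and does not affect the conclusion.
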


\begin{proof}
Let $w \in \Al_3^*$ be a square-free ternary word of length $n \ge 26$.
We show that has at least four non-critical points among the $n-1$ positions.
First of all, the points 1 and $n-1$ are non-critical,
since every letter of $\Al_3$ occurs in every factor of length four.


Without restriction we may assume that $01$ is a prefix of $w$.
Suppose that $w$ has at most three non-critical points.
Then the prefix $01$ occurs in $w$ at least twice. Indeed,
the word $\alpha=0121021202102$ of length 13 is 
uniquely the longest unbordered square-free word having $01$ only
as its prefix.
By Theorem~\ref{thm:1}, we can assume that 
the position $p=2$ is not critical; otherwise we consider the point $n-2$. 
Hence $w=01x01y$ where  $|01x|=\per(w)$ 
 and $y$ is a proper prefix of $x$.
Also, the prefix $01x$ is unbordered, since $01$ does not
occur in $x$ by the assumption that $p=2$ is not critical.
But now $|01x| \le |\alpha|=13$ and so $n\le 25$; a contradiction.
\end{proof}

\begin{example}
The word 
$w=01210212021020121021202$
of length 23 with $\per(w)=13$ has only three
non-critical points, $p=1,2,22$.
On the other hand, e.g.,
$v=01020121021201020121020$
of length 23 with $\per(v)=22$ has 14 non-critical points.
\qed\end{example}

The upper bound on the critical points is optimal:

\begin{theorem}
There are arbitrarily long square-free words $w \in \Al_3^*$
with $\Cr(w) =|w|-5$.	
\end{theorem}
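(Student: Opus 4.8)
The plan is to construct an explicit infinite family of square-free ternary words achieving the bound. We already know from Theorem~\ref{thm:low} that, for long words, $\Cr(w) \le |w|-5$, with the four ``automatic'' non-critical points being $p=1$ and $p=|w|-1$ (because every letter of $\Al_3$ occurs in every factor of length four), together with $p=2$ and $p=|w|-2$ (coming from the fact that short prefixes/suffixes avoiding a repeated two-letter block are forced to be short). So to hit $\Cr(w)=|w|-5$ we need words in which \emph{exactly} these four positions fail to be critical and all other $|w|-5$ positions are critical. The idea is to force the critical interval of Theorem~\ref{thm:1} to be as wide as possible, namely $3 \le p \le |w|-3$, which happens precisely when the word is unbordered ($\per(w)=|w|$) and when the minimal local periods at $p=3$ and at $p=|w|-3$ already equal $|w|$.

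First I would exhibit a suitable morphic or combinatorial construction. A natural candidate is to take prefixes of the word $\Hall$ obtained by iterating $\tau$, or a closely related sequence, and to look for a length (or a sub-family of lengths) where $\per(w)=|w|$ and where the left end ``opens up'' quickly: concretely, one wants a prefix of the form $01x$ with $01x$ unbordered and with $01$ reoccurring in $w$ immediately after a short unbordered block, and symmetrically at the right end. The cleanest route is probably to describe $w$ directly: pick a fixed short square-free seed $s$ of length about $5$ or $6$ whose only length-$2$ prefix-type border obstruction is at $p\le 2$, sandwich it appropriately, and then argue that prepending/appending letters (or applying a square-free morphism) preserves all the needed criticality. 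I would then verify square-freeness either by a direct check (if a single explicit word of each length) or by invoking a known square-free morphism.

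The core computation is to show that for the constructed $w$ every position $p$ with $3 \le p \le |w|-3$ is critical. By Theorem~\ref{thm:1} it suffices to show that $p=3$ and $p=|w|-3$ are critical, since the critical points form an interval containing $M(w)$; so the family is critical on $[3,|w|-3]$ as soon as it is critical at both endpoints $3$ and $|w|-3$. For $p=3$: the prefix of length $3$ is some $abc$, and I must show the minimal repetition word $u$ at $p=3$ has length $|w|$, i.e.\ $abc$ reoccurs only with left overflow all the way across $w$, equivalently $w=abc\cdot w'$ with $abc$ occurring as a factor only as a prefix and a suffix-ish overflow — this is exactly the statement that $w$ is unbordered and that the length-$3$ prefix is not an internal factor except in a way forcing $|u|=|w|$. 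Using Lemma~\ref{lem:overlap} (no self-overlaps in square-free words) and Lemma~\ref{LRover}, the key point is that if $u$ at $p=3$ had right overflow with $|u|<|w|$, that would force a border of $w$ or an early reoccurrence of the length-$3$ prefix; choosing the construction so that the length-$3$ prefix reoccurs only near the very end gives $|u|=|w|=\per(w)$, hence criticality. The symmetric argument (via the reversal, which is also square-free) handles $p=|w|-3$.

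The main obstacle I expect is the simultaneous control of both ends together with unborderedness and square-freeness: it is easy to make one end behave, but making $\per(w)=|w|$ (unbordered) while also keeping the length-$3$ prefix and length-$3$ suffix from reoccurring too early — and doing this for \emph{infinitely many} lengths — requires a genuinely clever family rather than ad hoc tinkering. I would resolve this by finding a square-free morphism $\sigma$ such that $\sigma$ applied to a fixed good word $w_0$ (with $\Cr(w_0)=|w_0|-5$) again has exactly four non-critical points; the verification then reduces to a finite check on $\sigma$ and the behaviour of local periods under $\sigma$, in the spirit of how $\tau$ is used to produce $\Hall$. If no clean morphism presents itself, the fallback is to pad a fixed ``core'' word on the left and right with a periodic-looking but square-free tail whose only effect is to lengthen the unbordered central structure, again checking criticality at $p=3$ and $p=|w|-3$ by hand and transporting it to all intermediate positions via Theorem~\ref{thm:1}.
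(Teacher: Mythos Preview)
Your high-level strategy is right: you correctly identify that the four non-critical positions must be $1,2,|w|-2,|w|-1$, that $w$ should be unbordered, and that via Theorem~\ref{thm:1} it suffices to make $p=3$ and $p=|w|-3$ critical. You also see that the way to do this is to arrange that the length-$3$ prefix and suffix of $w$ never reoccur internally. But the proposal stops short of an actual construction, and the candidates you list do not work or are too vague. Prefixes of $\Hall$ begin with $012$, which reoccurs throughout $\Hall$, so the minimal repetition word at $p=3$ is short and that idea fails outright. The morphism idea (``find $\sigma$ so that $\sigma(w_0)$ still has exactly four non-critical points'') would require controlling local periods under substitution, which is delicate and not something you indicate how to do. The ``padding'' fallback is closest in spirit, but as stated it gives no mechanism for ensuring the length-$3$ prefix/suffix are globally absent from the interior.

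The key idea you are missing, and which the paper uses, is to exploit the \emph{forbidden factors} of $\Hall$: recall from the preliminaries that $\Hall$ contains neither $010$ nor $212$. The paper takes a factor $\beta=10201\alpha12021$ of $\Hall$ and sets $w=0\beta 2=010201\alpha120212$. Now $w$ begins with $010$ and ends with $212$, and since the interior $\beta$ is a factor of $\Hall$, neither $010$ nor $212$ occurs anywhere in $w$ except at the very ends. This single observation forces, at every point $3\le p\le |w|-3$, the minimal repetition word to have both left and right overflow (otherwise the prefix containing $010$, or the suffix containing $212$, would reoccur inside $\beta$), and Lemma~\ref{LRover} then makes all those points critical at once. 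Square-freeness and unborderedness of $w$ are immediate, and a short table handles the four remaining positions. So the missing ingredient is not more machinery but this one-letter extension trick turning avoided factors of $\Hall$ into boundary markers.
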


\begin{proof}
We rely on the infinite square-free word $\Hall$ that is
a fixed point of the morphism $\tau$.
Consider the factors of $\Hall$
of the form $\beta=10201\alpha12021$. For our purpose, it  suffices to choose the words $\beta$ that start after the position 9 
of  $\Hall$, i.e., just after the prefix $012021012$.
There are infinitely many words $\beta$ since the suffix $12021$ is a factor of~$\tau^2(0)$.

For fixed middle word $\alpha$, consider $w=0\beta 2=010201\alpha 120212$ that begins and ends in the `forbidden' words $101$ and $212$
that do not occur in $\Hall$.
It is, clearly, square-free and unbordered. Each point $p$
with $2 < p < |w|-2$ is critical, since
the minimal repetition word at $p$ must have both left and right overflow
in order to leap over a factor $101$ or $212$;
see Lemma~\ref{LRover}.
Table~\ref{Table:Thue} lists the local periods and the minimal repetition words
for the remaining four (non-critical) points.
\end{proof}

\begin{table}[htb]
\begin{center}
\begin{tabular}{c|c|c}
$p$ & $\per(w, p)$ & Rep. word \\ \hline	
$1$ & 2 &    $10$\\
$2$ & 4 &    $0201$\\
$|w|-2$&4& $1202$\\
$|w|-1$&2& $21$
\end{tabular}
\caption{Local periods of non-critical points.}\label{Table:Thue}	
\end{center}
\end{table}

\section{Minimum number of critical points}

We now turn to the minimality problem of critical points
in square-free words.

\begin{theorem}
For each square-free word $w \in \Al_3^*$, we have $\Cr(w) \ge |w|/4$.
\end{theorem}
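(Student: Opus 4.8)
The plan is to bound the number of critical points from below by bounding the number of \emph{non-critical} points from above, and the key tool is Theorem~\ref{thm:1}, which says that the non-critical points split into two intervals, one forming a prefix block $\{1,2,\ldots,q_0-1\}$ and one forming a suffix block $\{q_1+1,\ldots,|w|-1\}$, with the critical interval $[q_0,q_1]$ straddling the midpoint $M(w)$. So it suffices to show that each of these two blocks has length at most roughly $3|w|/8$, or more precisely that together they contain at most $\tfrac34|w|$ points. By the reversal symmetry used throughout the paper it is enough to control the prefix block of non-critical points.

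First I would analyse the structure of the prefix block. If $p\le M(w)$ is non-critical, then (as in the proof of Theorem~\ref{thm:low}) the minimal repetition word $u$ at $p$ has left overflow but not right overflow, so $w = x\cdot y$ with $|x|=p$, $u = x'x$, and $y = uy''$; in particular $x$ is a suffix of the prefix $u x'^{-1}\!\cdots$ — concretely $x$ reappears inside $w$ starting at position $p-|u|+|x'|$, i.e. $u$ is itself a prefix-related factor. The cleanest way to say it: non-criticality at $p\le M(w)$ forces $\per(w,p) = \per(w,p-1)$ or at any rate it forces $x$ to occur again to the right, so the prefix of length $p$ is \emph{not} the longest occurrence-determining factor. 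The point is that a long run of consecutive non-critical points $1,\ldots,q_0-1$ means that the prefix $x_{q_0-1}$ of length $q_0-1$ is still bordered (it recurs), and one can bound $q_0-1$ in terms of the smallest period $\pi = \per(w)$ of $w$, since all these short prefixes live inside the first period.

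The second and decisive step is a counting/packing argument relating $\pi = \per(w)$ to $|w|$ and to the size of the non-critical blocks. On the one hand, by Theorem~\ref{CFT} there is a critical point $p_0\le\pi$, so the prefix block has length at most $\pi-1$; symmetrically the suffix block has length at most $\pi-1$; hence the number of non-critical points is at most $2\pi-2$, giving $\Cr(w)\ge(|w|-1)-(2\pi-2) = |w|-2\pi+1$. This is strong when $\pi$ is small. On the other hand, when $\pi$ is \emph{large} — close to $|w|$ — one needs the opposite bound: a large period forces many distinct short factors, and since $w$ is square-free over a ternary alphabet its subword complexity is linear, but more usefully, a non-critical point near the midpoint would (via Lemma~\ref{lemma:mid} and Theorem~\ref{thm:1}) have to be critical, so the non-critical blocks cannot both reach length $\pi-1$ simultaneously when $\pi$ is large. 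Balancing these two regimes — small $\pi$ handled by the $|w|-2\pi+1$ bound, large $\pi$ handled by a direct bound on how far the non-critical prefix block can extend given that $M(w)$ is critical and the complexity of a square-free ternary word grows — should yield $\Cr(w)\ge|w|/4$ after choosing the crossover point $\pi\approx 3|w|/8$.

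The main obstacle I expect is the large-period regime: when $\per(w)$ is close to $|w|$, the easy "critical point before the period" estimate is useless, and one must instead argue that the prefix block of non-critical points is short for an intrinsic reason. The natural mechanism is that inside that block the prefixes are all bordered, so the prefix of length $q_0-1$ has a nontrivial period; but square-freeness of a ternary word sharply limits how long a bordered prefix can be before either a square appears or the border itself forces new structure — indeed Lemma~\ref{lem:overlap} shows borders cannot overlap, so the prefix block length is governed by the longest unbordered square-free word with a prescribed two-letter prefix, exactly the constant $13$ that appeared (as $|\alpha|$) in the proof of Theorem~\ref{thm:low}. Pinning down the precise constant and making the two regime bounds meet cleanly at the claimed ratio $1/4$ is the delicate part; everything else is the structural setup already provided by Theorems~\ref{thm:1} and~\ref{CFT}.
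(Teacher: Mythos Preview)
Your two-regime strategy does not close. The small-period bound $\Cr(w)\ge |w|-2\per(w)+1$ is correct and handles $\per(w)\le 3|w|/8$, but the large-period case is the substantive one---every unbordered square-free word has $\per(w)=|w|$, and the near-extremal examples built in Theorem~\ref{thm:exists} have period exceeding $3|w|/4$---and none of the mechanisms you propose for it work. The constant $13$ from the proof of Theorem~\ref{thm:low} is an absolute constant and cannot bound a non-critical block whose length is allowed to grow like $3|w|/8$; linear subword complexity of square-free ternary words does not translate into a lower bound on where the first critical point sits; and the ``bordered prefix'' observation is slightly off: non-criticality of $p\le M(w)$ tells you that the prefix of length $p$ \emph{recurs} as a factor (so some longer prefix is bordered), not that the length-$p$ prefix is itself bordered, and this recurrence alone does not bound $p$. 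You correctly identified the obstacle but did not overcome it.

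The paper does not split on $\per(w)$ at all. It takes any non-critical $p<|w|/2$ and non-critical $q>|w|/2$ and shows directly that $q-p\ge |w|/4$; by Theorem~\ref{thm:1} the critical interval then has length at least $|w|/4$. The argument is local and period-free: the minimal repetition word $u$ at $p$ has only left overflow, the minimal repetition word $v$ at $q$ has only right overflow, and square-freeness forces $|u|,|v|>|w|/4$. If $q-p<|w|/4$ the second copy of $u$ overruns $q$ and the first copy of $v$ overruns $p$, so the two pairs of copies interlock; comparing the resulting common prefix and suffix pieces of $u$ and $v$ forces either a square in $w$ or $\per(w)\le |u|=\per(w,p)$, contradicting the non-criticality of $p$. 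The missing idea is this interaction of the two boundary repetition words, not a global period dichotomy.
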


\begin{proof}	
Let $w \in \Al_3^*$ be a square-free word of length $|w|=n$.
We remind first that the middle point $M(w)$ is always critical in $w$.
We show that the distance between two non-critical points on the opposite
sides of the middle point is at least $n/4$.
The claim then follows from Theorem~\ref{thm:1}.

Assume, contrary to the claim, that $p$ and $q$ are non-critical points such that
\begin{equation}\label{eq:dist}
p < n/2 < q \ \text{ and } \  q-p < n/4\,.
\end{equation}
Let $u$ and $v$ be the minimal repetition words at $p$ and $q$, respectively.
Consequently, the word~$u$ has left overflow, and $v$ has right overflow.
Since $p$ and $q$ are on the opposite sides of the middle point,
$p \ge n/4$ and $q \le 3n/4$.
From $p \ge n/4$ it follows that $|u| > n/4$;
for otherwise $u u$ would be a factor in~$w$. 
Similarly $|v| \ge n/4$ and $q-|v| < n/2$. 
Since $q-p < n/4$, we have $p+|u| \ge q$, i.e.,\ the second occurrence of $u$ reaches over the position $q$. 
Similarly the first occurrence of $v$ starts before 
the position $p$; see Fig.~\ref{Fig:dist1}, where $|z|=q-p$.

\begin{figure}[htb]
\tikzstyle{mystate}=[inner sep=0pt, outer sep=0pt]
\begin{center}
\begin{tikzpicture}[line width=0.06pc, scale=0.75](40,100)
\node  [mystate] (A) at (0,1) {} ; 
\draw (A)--(0,1.2);
\node  [mystate] (B) at (1,1) {};
\draw(1,0.8)--(1,1.2);
\draw[dashed] (A)--(B);
\node  [mystate] (C) at (5,1) {};
\draw (C)--(5,1.2);
\node  [mystate] (D) at (3,1) {};
\draw (D)--(3,0.8);
\node  [mystate] (E) at (10,1) {};
\draw (E)--(10,1.2);
\node  [mystate] (F) at (8,1) {};
\draw (F)--(8,0.8);
\node  [mystate] (G) at (11,1) {};
\draw (11,1.2)--(11,0.8);
\node  [mystate] (H) at (13,1) {};
\draw (H)--(13,0.8);
\draw [dashed](G)--(H);
\draw [line width=1.25pt] (B)--(G);
\node at (0.5,1.3){$u_1$};
\node at (2.25,1.3) {$u_2$};
\node at (4.0,1.3) {$u_3$};
\node at (6.5,1.3) {$z$};
\node at (9,1.3) {$v_1$};
\node at (10.5,1.3) {$v_2$};
\node at (12.0,1.3) {$v_3$};
\draw (0,1.75)--(4.95,1.75);
\draw (5.05,1.75)--(10,1.75);
\node at (2.5,2.0) {$u$};
\node at (7.5,2.0) {$u$};
\draw (3,0.35)--(7.95,0.35);
\draw (8.05,0.35)--(13,0.35);
\node at (6,0) {$v$};
\node at (10,0) {$v$};
\draw [dashed] (C)--(5,2.25);
\draw [dashed] (A)--(0,1.75);
\draw [dashed] (D)--(3,0.35);
\draw [dashed] (E)--(10,1.75);
\draw [dashed] (F)--(8,0.0);
\draw [dashed] (H)--(13,0.35);
\node at (5,2.5) {$p$};
\node at (8,-0.25) {$q$};
\end{tikzpicture}
 \end{center}
 \caption{The local repetition words $u$ and $v$
for non-critical points $p$ and $q$.}
 \label{Fig:dist1}
\end{figure}
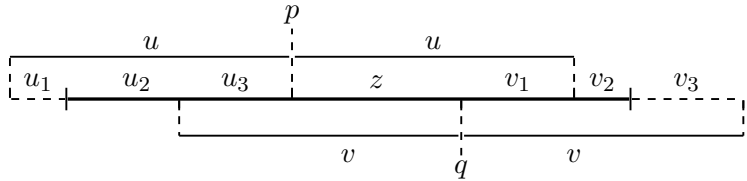

We now rely on the notations of the factors in Fig.~\ref{Fig:dist1}.

The words $u_3$ and $v_1$ are both prefixes of $v$ and suffixes of $u$.
If $|v_1| > |u_3|$ then, as prefixes of $v$, we have $v_1=u_3x$ for some nonempty $x$. But then $xx$ would be a square at $p$; a contradiction. 
If $|u_3| > |v_1|$ then, as suffixes of $u$, we have $u_3=xv_1$
for some nonempty $x$ yielding again a square $xx$ at $p$; a contradiction.
Therefore $v_1=u_3$. In this case $z=u_1u_2=v_2v_3$, and 
\[
w=u_2v_1 z v_1 v_2 = u_2v_1u_1u_2v_1v_2\,.
\]
Now, $v=v_1u_1u_2=v_1v_2v_3$ and so $u_1u_2=v_2v_3$
meaning that one of $u_1$ or $v_2$ is a prefix of the other.
To avoid $(u_2v_1u_1)^2$ in $w$, the word $v_2$ must be a proper prefix of $u_1$.
But now $\per(w) \le |u_2v_1u_1|=|u|=\per(w, p)$ contradicting the assumption
that $p$ was not critical.
This proves the claim.
\end{proof}

\EXTRA{
\begin{figure}[htb]
\unitlength=5mm
\tikzstyle{mystate}=[inner sep=0pt, outer sep=0pt]
\begin{center}
\begin{tikzpicture}[line width=0.06pc](40,100)
\node  [mystate] (A) at (2,1) {} ; 
\node  [mystate] (B) at (10,1) {};
\draw (A)--(B);
\node  [mystate] (Z) at (0,1) {};
\draw [dashed] (Z)--(A);
\draw (Z)--(0,1.2);
\draw (A)--(2,1.3);
\draw (A)--(2,0.7);
\draw (B)--(10,1.3);
\node  [mystate] (C) at (4.5,1) {};
\draw (C)--(4.5,1.2);
\node[mystate] (D) at (9,1) {};
\draw (D)--(9,1.2);
\node at (1,1.3) {$as$};
\node at (3.25,1.3) {$x$};
\node at (9.5,1.3) {$t$};
\node at (4.7,1.3) {$a$};
\node at (6.75,1.3) {$sx$};
\node at (6.75,1.75) {$y=sxt$};
\node [mystate] (X) at (1,1) {};
\draw (X)--(1,0.8); 
\draw[dashed] (1,0.9)--(2,0.9);
\node at (1.5,0.7) {$z$};
\draw (4.75,0.8)--(4.75,1);
\node [mystate] (Y) at (8.5,1) {};
\draw (Y)--(8.5,0.8);
\draw (D)--(9,0.8);
\node at (8.75,0.7) {$r$};
\node at (6.75,0.7) {$v=zxa$};
\end{tikzpicture}
 \end{center}
 \caption{$w=yaxyax_1$ with period $p=|yax|$.
The local squares are $axy$ and $zya=xy$,
respectively.
It is not assumed that $z$ is a suffix of $ax$.}
 \label{Pic:2}
\end{figure}
}

For the existence part of the next theorem, we take a quick
technical analysis of the prefixes of the word $\Hall$.
An induction argument gives $|\tau^n(0)|=3\cdot 2^{n-1}$,
$|\tau^n(1)|=2^{n}$ and $|\tau^n(2)|=2^{n-1}$.
For instance,
\[
|\tau^{n+1}(0)| = |\tau^n(012)| = 3\sdot 2^{n-1} + 2^n + 2^{n-1}
= 3\sdot 2^n\,.
\]

\newcommand{\pr}{\mathbf{p}}

Define the words $\Hall_n$, for $n \ge 1$, as follows 
\[
\Hall_n = \tau^{2n-1}(0) \tau^{2n-3}(0) \cdots \tau^3(0)\tau(0)\,.
\]
We show that $\Hall_n 0$ is a prefix of $\Hall$ of length $4^n$.
First $\Hall_1 0= 0120=\tau(0)0$ is a prefix of $\Hall$.
Inductively, we have
\[
\tau^2(\Hall_n 0) =
\tau^{2n+1}(0) \tau^{2n-1}(0) \cdots \tau^3(0)\tau^2(0)
= \Hall_{n+1}0\sdot 21\,.
\]
 and hence also $\Hall_{n+1}0$ is a prefix of $\Hall$.

For the length of $\Hall_n$, we obtain
\[
|\Hall_n|=\sum_{i=1}^{n} 3\cdot 2^{2(n-i)} = 
3\sum_{i=1}^{n} 4^{n-i} = 
4^n-1.
\]
As a prefix of $\Hall$, the word $\Hall_n$ is square-free. 

\begin{theorem}\label{thm:exists}
For all real numbers $\delta > 0$, there exists a square-free
ternary word $w=w(\delta)$ the density of which satisfies
\[
0.25 < \frac{\Cr(w)}{|w|} < 0.25+\delta\,.
\] 	
\end{theorem}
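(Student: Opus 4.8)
\textbf{Proof proposal for Theorem~\ref{thm:exists}.}

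The plan is to produce, for each $\delta>0$, a long square-free word built by padding a prefix of $\Hall$ with a short square-free suffix so that its density is just above $1/4$. The starting observation is that the density of any square-free ternary word is bounded below by $1/4$ (the previous theorem) and that, by Theorem~\ref{thm:1}, the critical points form an interval. To make the density small we therefore want a word whose period $\per(w)$ is large relative to $|w|$, so that the non-critical interval near each end is long. The most economical source of such words is $\Hall$ itself: I would take $w_n$ to be the prefix $\Hall_n 0$ of $\Hall$, which we have already shown has length $4^n$ and is square-free.

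The key computation is then to bound $\per(w_n)$ from below. The point is that $\Hall_n = \tau^{2n-1}(0)\tau^{2n-3}(0)\cdots\tau(0)$ has its ``bulk'' in the first block $\tau^{2n-1}(0)$, whose length is $3\cdot 2^{2n-2}=\tfrac{3}{4}\cdot 4^n$. I would argue that a repetition word at a position $p$ inside or near this first block must be long: since $w_n$ is square-free, the minimal repetition word $u$ at $p$ has left or right overflow (Theorem~\ref{thm:over}), and because $\Hall$ is not eventually periodic with a short period — indeed $\Hall$ avoids $010$, $212$ and $01201$ — any genuine near-repetition forces $|u|$ to be comparable to $p$ (cf.\ the argument in Theorem~\ref{thm:low}, where an unbordered square-free prefix beginning with a fixed $2$-letter block has length at most $13$). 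Concretely, since $w_n$ begins with $\tau^{2n-1}(0)$ and the word $\Hall$ contains each letter in every factor of length four, the only critical points are those $p$ for which some repetition word straddles a ``forbidden-like'' local obstruction; the upshot I expect is $\Cr(w_n) \le c\cdot 4^{n-1}$ for a constant $c$ slightly larger than $1$, equivalently a density $\le 1/4 + o(1)$ as $n\to\infty$. Combined with the lower bound $\Cr(w_n)/|w_n| > 1/4$ from the previous theorem, choosing $n$ large enough that the $o(1)$ term is below $\delta$ yields $w=w_n$ with $0.25 < \Cr(w)/|w| < 0.25+\delta$.

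An alternative, and perhaps cleaner, route avoids analysing $\per(w_n)$ directly: glue together two translated copies of a prefix of $\Hall$ at a controlled junction so that the resulting word is square-free and unbordered (forcing $\per(w)=|w|$, hence the non-critical interval on each side is as long as possible), then verify via Theorem~\ref{thm:1} and the distance bound from the proof of the previous theorem — the distance between opposite non-critical points is exactly $n/4$ in the extremal case — that the critical interval has length $|w|/4 + O(1)$. Either way the density tends to $1/4$ from above, and the inequality $>0.25$ is automatic.

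The main obstacle I anticipate is the lower bound on $\per(w_n)$ (or, in the alternative, the square-freeness and unborderedness of the glued word): one must rule out that the self-similar prefix $\Hall_n$ happens to have a short period caused by the repeated $\tau^{2k-1}(0)$-structure. I would handle this by exploiting that $\Hall$ is itself square-free and not ultimately periodic, so no prefix can have period much smaller than, say, half its length; the quantitative bound needed is modest because we only need $\per(w_n) \ge (\tfrac34 - o(1))|w_n|$, which follows from the geometric decay $|\tau^{2n-1}(0)| = \tfrac34 |w_n|$ dominating the tail $\sum_{i<n}|\tau^{2i-1}(0)| = \tfrac14|w_n| - 1$.
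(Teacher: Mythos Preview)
Your proposal has a genuine gap: you never actually establish the upper bound $\Cr(w_n)\le (1/4+o(1))|w_n|$, and the heuristic you offer for it is wrong. Knowing that $\per(w_n)$ is large relative to $|w_n|$ says nothing about the \emph{length} of the critical interval. The paper's own Example with $w=\tau^4(0)$ is a prefix of $\Hall$ that is unbordered ($\per(w)=|w|=24$) and yet has $\Cr(w)=12$, a density above $0.5$. So ``large period'' is not the mechanism that forces density near $1/4$; your sentence ``the upshot I expect is $\Cr(w_n)\le c\cdot 4^{n-1}$'' is precisely the step that needs an idea, and none is supplied. Your alternative route (two glued copies, unbordered) runs into the same wall for the same reason: unborderedness maximises the period, not the non-critical margins.

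What the paper does instead is engineer the word so that the \emph{boundary} of the critical interval can be located exactly. It sets $w_x=0x\,02\,x\,1\,0x\,02\,x\,0$ (four near-copies of $x$ with distinct separators), so that $|w_x|=4|x|+8$. Then it checks two concrete local periods: at $p=2|x|+3$ the minimal repetition word is $10x02x$ of length $2|x|+4<\per(w_x)$, and at $p=3|x|+7$ the minimal local period is $|x|+2$; together with the midpoint lemma and the fact that the suffix $2x0$ occurs only once, this pins the critical interval to $[2|x|+4,\,3|x|+6]$, giving $\Cr(w_x)=|x|+3$ and hence density exactly $\tfrac14+\tfrac{1}{|w_x|}$. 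The remaining (nontrivial) work is to exhibit arbitrarily long $x$ for which $w_x$ is square-free; the paper takes $x_n=120102\,\Hall_n$ and verifies square-freeness by showing the relevant pieces are factors of $\Hall$. The point you are missing is that achieving density $\to 1/4$ requires identifying explicit short repetition words on \emph{both} sides of the midpoint at positions roughly $|w|/2$ and $3|w|/4$; a period estimate alone cannot do this.
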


\begin{proof}
For any square-free word $x\in \Al_3^*$, let
\begin{equation}\label{eq:wx}
w_x= 0x02x10x02x0\,.
\end{equation}
Suppose first that $w_x$ is square-free, and thus that $x$ does not overlap with itself in~$w_x$. The suffix $2x0$ of $w_x$ does not occur elsewhere in $w_x$,
and hence the point $3|x[+6 $ is critical, since it must have both overflows.
It is the rightmost critical point. Indeed, $\per(w_x,3|x|+7)=|x|+2$.
For the point $t=2|x|+3$, the minimal repetition word is
$10x02x$ of length $2|x|+4 < \per(w)$ since $\per(w) > 3|x|+7$.
Hence the middle point $2|x|+4$ is the leftmost critical point.
It follows that $w_x$ has $q-p+1=|x|+3$ critical
points. Thus  
\[
\frac{\Cr(w)}{|w_x|} = \frac{|x|+3}{4|x|+8}= 0.25 + \frac{1}{|w_x|}\,,
\]
which has the limit $0.25$ as $|x| \to \infty$.

It remains to show that there are arbitrarily long square-free words $x$
for which $w_x$ is square-free. 
Again, we lean on the word
$\Hall$. 
We consider the words $w_{x_n}$ where 
\[
x_n = 120102\, \Hall_n\,.
\]
We have
\begin{align*}
w_{x_n}= \,
& 0\sdot120102 \Hall_n\cdot 02\sdot 120102\Hall_n\sdot 1
0\sdot120102 \Hall_n\cdot 02\sdot 120102 \Hall_n\sdot 0
\end{align*}
Since $010$ and $212$ do not occur in $\Hall$, both $010$ and $212$ 
would have to be aligned in any square $u u$ of $w_{x_n}$,
which is not possible by the `markers' $02$, $1$ and $0$ dividing the word.
Also, since $\Hall_n$ has a border $\tau(0)$,
one easily checks that there are no short squares $u u$ in $w_x$
for $|u| \le 4$.
Hence a possible square must be inside one of the words 
(a) $102\Hall_n021$,
(b) $102\Hall_n 101201$, or
(c) $102\Hall_n 0$.
We consider these cases separately.
Recall that $\Hall_1=012=\tau(0)$. Also, since  $\Hall$ is a fixed point
of the morphism $\tau$, whenever $v$ is a factor of
$\Hall$, so is $\tau(v)$.

\smallskip
(a) 
Let $\alpha_n=102\Hall_n021$.
The word $\alpha_1=102012021$ occurs in $\Hall$ after position~9. 
We prove by induction that each $\alpha_n$ is a factor of $\Hall$,
and thus they are square-free.
Suppose, using \eqref{eq:Hall-n}, that 
\[
\alpha_i =102\Hall_i 021= 102 \tau^{2i-1}(0) \cdots \tau(0) 021
\]
is a factor of $\Hall$. Then
\[
\tau(\alpha_i) = 0201\sdot 21 \tau^{2i}(0) \cdots \tau^2(0) 012\sdot021\,
\]
where the indicated factor is denoted by 
$z=21 \tau^2(0) 012$.
By mapping with $\tau$, we obtain
\[
\tau(z)=102  \tau^{2i+1}(0) \cdots \tau^3(0) \tau(0) 021
= 102 \Hall_{i+1} 021 = \alpha_{i+1}.
\]
Hence $\alpha_n$ is a factor of $\Hall$ for all $n$. 

\smallskip
(b)
We employ in this case the same techniques as in (a) except that we need to
eliminate the last letter $1$ of the word. 
In order for $102\Hall_n 101201$ to have a square $u u$,
the former occurrence of $u$ in the square must be a factor of $102\Hall$.
However, $\Hall$ does not have a factor $01201$ since it would have to be
part of the square $012012$. Therefore we can, and must,  choose
$\beta_n=102\Hall_n 101202$.

The first occurrence of 
$\beta_1=102\Hall_1 101202 = 102\tau(0)101202$ 
in $\Hall$ starts after position~17.
We proceed inductively as in case (a). Suppose that
\[
\beta_i=102\Hall_i 101202 =
102\tau^{2i-1}(0) \cdots \tau(0) 101202
\] is a factor of $\Hall$. 
Mapping by $\tau$ gives
\[
\tau(\beta_i) = 0201\sdot 21 \tau^{2i}(0) \cdots \tau^2(0) 0201\sdot 202\,,
\]
where the indicated portion $z=21 \tau^{2i}(0) \cdots \tau^2(0) 0201$ gives
\[
\tau(z) = 102 \tau^{2i+1}(0) \cdots \tau^3(0) \tau(0)  101202
= 102 \Hall_{i+1}101202=\beta_{i+1}\,.
\]
Hence $\beta_n$ is a factor of $\Hall$, and thus square-free, for all $n$.

\smallskip
(c)
The word $102\Hall_n 0$ is a factor of $\alpha_n$ and thus square-free.
This proves the claim.
\end{proof}

The chosen words $x_n = 120102\, \Hall_n$ are not the only ones that
give a square-free word $w_x$.

\begin{problem}
Does there exist, for all $n$, a word $x$ of length $n \ge 20$ such that $w_x$? 
\end{problem}

\begin{problem}
Does there exist a word $w$ such that $\Cr(w)=|w|/4$?

\end{problem}

\bibliographystyle{plain}
\bibliography{bibo.bib}
\end{document}